\newtheorem{lemma}{Lemma}
\newtheorem{prop}{Proposition}
\newtheorem{theorem}{Theorem}
\newtheorem{cor}{Corollary}
\newcommand{\bu}{\mathbf{u}}
\newcommand{\bn}{\mathbf{n}}
\newcommand{\ds}{\mathbf{ds}}
\newcommand{\np}{\nabla^{\bot}}
\newcommand{\Lv}{\operatorname{L}_{\textrm{vor}}}
\newcommand{\Lo}{\operatorname{L}_0}
\newcommand{\Lw}{\widehat{\operatorname{L}}_0}
\newcommand{\eL}{\operatorname{L}}
\newcommand{\K}{\operatorname{K}}
\DeclareMathOperator{\curl}{curl}
\DeclareMathOperator{\dv}{div}
\begin{document}

\title{The $L^2$ essential spectrum of the 2D Euler operator}
\author{Graham Cox}
\email{ghcox@email.unc.edu}
\address{Department of Mathematics, UNC Chapel Hill, Phillips Hall CB \#3250, Chapel Hill, NC 27599}

\begin{abstract} Even in two dimensions, the spectrum of the linearized Euler operator is notoriously hard to compute. In this paper we give a new geometric calculation of the essential spectrum for 2D flows. This generalizes existing results---which are only available when the flow has arbitrarily long periodic orbits---and clarifies the role of individual streamlines in generating essential spectra.
\end{abstract}

\thanks{The author would like to thank Yuri Latushkin for many helpful discussions during the preparation of this work, and the referees for their comments and suggestions during the review process. This research has been supported by the Office of Naval Research under the MURI grant N00014-11-1-0087.}

\subjclass{Primary 76B99; Secondary 35Q31}
\keywords{Euler equation, essential spectrum, coarea formula.}
\maketitle

\section{Introduction}
The Euler equation gives the time evolution of an incompressible fluid of constant density as
\begin{align}
	\frac{\partial \bu}{\partial t} + (\bu \cdot \nabla) \bu &= - \nabla p \label{eqn:Euler} \\
	\nabla \cdot \bu &= 0 \nonumber
\end{align}
where $\bu$ is the fluid velocity and $p$ is the pressure. For a simply-connected domain $\Omega \subset \mathbb{R}^2$ with tangential boundary conditions, $\bu \cdot \bn = 0$, the velocity can be written as $\bu = - \np \psi$, where $\psi$ is the stream function. Then the vorticity, $\omega = \curl \bu = - \Delta \psi$, evolves according to
\begin{align}
	\frac{\partial \omega}{\partial t} = \np \psi \cdot \nabla \omega. \label{eqn:vor}
\end{align}
For a non-simply connected domain, the relationship between the velocity and vorticity formulations is more subtle, and depends on the choice of boundary conditions. While not necessary for the results presented here (which deal only with the vorticity operator), this issue is of independent interest, and we refer the reader to \cite{LN10} for further details.

In vorticity form, the linearized Euler operator at a steady state, $\omega_0 = - \Delta \psi_0$, can be written as
\begin{align*}
	\Lv = \Lo + \K
\end{align*}
where $\Lo = \np \psi_0 \cdot \nabla$ and $\K = - \nabla \omega_0 \cdot \curl^{-1}$. The operator $\curl^{-1}$ is defined to be $\nabla^{\bot} \circ \Delta^{-1}$, where $\Delta^{-1}$ denotes the inverse  Laplacian on the space of zero mean functions, subject to the boundary conditions
\begin{align}
	\left. \psi \right|_{\Sigma_i} = \text{ const. and } \int_{\Sigma_i} \frac{\partial \psi}{\partial n} = 0,
	\label{BC}
\end{align}
where $\{\Sigma_i\}$ are the connected components of $\partial \Omega$.
These boundary conditions (which are elaborated on in Section 5 of \cite{L04}) ensure that the corresponding velocity field, $\bu = \curl^{-1} \omega$, satisfies $\bu \cdot \bn = 0$ and $\int_{\Sigma_i} \bu \cdot \ds = 0$ on each component of the boundary.

Since $\Lv$ is degenerate and non-elliptic, it is quite difficult to find conditions that guarantee the existence of an unstable eigenvalue. The earliest results in this area \cite{F50,H64,R80} give necessary conditions for instability in the special case of parallel shear flows. More recent results \cite{BM99,BFY99,FVY00,L03} give sufficient conditions for instability in a number of special cases. A more general sufficient condition appears in \cite{L04}, but this appears to be difficult to evaluate in many cases of interest. For further details, an excellent recent overview of stability results can be found in \cite{SF05}.

Recently, progress has been made in computing the essential spectrum, which is related to shortwave perturbations of the velocity field. In \cite{LS03,LS05} it was shown that the $H^m$ essential spectrum comprises a vertical strip (for $m \neq 0$), with the width determined by the fluid Lyapnuov exponent. When computed in $L^2$ the essential spectrum must be purely imaginary, because $\Lo$ is skew-adjoint. In fact it is known that $\sigma_{\textrm{ess}}(\Lv) = i \mathbb{R}$ if the underlying flow has arbitrarily long trajectories. The proof of this result, in \cite{LS03}, uses an approximate eigenfunction construction supported in a neighborhood of a trajectory connecting two hyperbolic fixed points of the underlying fluid flow.

The primary contribution of the current paper is a complete description of the $L^2$ essential spectrum in the absence of arbitrarily long trajectories, when such approximate eigenfunction constructions are no longer available. Our method also applies to flows with arbitrarily long trajectories, as long as ``most" orbits are either constant or periodic. This gives a new perspective on the results of \cite{LS03}, demonstrating explicitly how each periodic orbit contributes to the essential spectrum. The proof is geometric in nature, using the co-area formula (which was applied in a different context in \cite{L04}) to decompose the spatial domain into level sets of the stream function, and hence reduce the spectral problem to a family of one-dimensional, self-adjoint problems.

For the remainder of the paper we take $\Omega$ to be any one of the following:
\begin{enumerate}
	\item a smooth, bounded domain $\Omega \subset \mathbb{R}^2$ (not necessarily simply-connected);
	\item a bounded cylinder, i.e. $[0,L] \times [a,b]$ with the identification $(0,y) \sim (L,y)$ for $y \in [a,b]$;
	\item the torus $\mathbb{T}^2$.
\end{enumerate}
When $\Omega$ has a nontrivial boundary (i.e. in cases (1) and (2) above) we assume that $\nabla^{\bot} \psi_0 \cdot \bn = 0$; this ensures that $\psi_0$ is constant on each connected component of the boundary.

Our main result concerns the essential spectrum of $\Lv$ acting on the space of zero mean functions $X_0 \subset L^2(\Omega)$. (A precise definition of the unbounded operator $\Lv$ will be given in Section \ref{sec:def}.)
\begin{theorem} Let $\Omega$ be one of the domains listed above, with $\psi_0 \in C^3(\bar{\Omega})$ a nonconstant steady solution to the Euler equation. If the flow generated by $\nabla \psi_0$ does not have arbitrarily long trajectories, then
\begin{align}
	\sigma_{\textrm{ess}}(\Lv; X_0) = i \overline{\left\{ \frac{2\pi k}{T} : k \in \mathbb{Z}, \text{ the flow has an orbit with period } T \right\}}.
	\label{spec}
\end{align}
If the flow has arbitrarily long trajectories, then $\sigma_{\textrm{ess}}(\Lv; X_0) = i \mathbb{R}$.
\label{thm:main}
\end{theorem}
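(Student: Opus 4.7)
The plan is a two-step reduction: first a compactness argument replaces $\Lv$ by the skew-adjoint transport operator $\Lo$, and then the co-area formula diagonalizes $\Lo$ over the level sets of $\psi_0$.

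\textbf{Reduction to $\Lo$.} I would begin by showing that $\K = -\nabla\omega_0 \cdot \curl^{-1}$ is compact on $X_0$. Since $\psi_0 \in C^3(\bar\Omega)$, the multiplier $\nabla\omega_0$ is bounded, and $\curl^{-1} = \np \circ \Delta^{-1}$ maps $L^2 \to H^1$ boundedly under the boundary conditions \eqref{BC}, hence $L^2 \to L^2$ compactly by Rellich--Kondrachov. Weyl's theorem on stability of the essential spectrum under compact perturbations then gives $\sigma_{\text{ess}}(\Lv; X_0) = \sigma_{\text{ess}}(\Lo; X_0)$. Since $\np\psi_0$ is divergence-free and tangent to $\partial\Omega$, integration by parts shows $\Lo$ is skew-adjoint, reducing the theorem to identifying a subset of $i\mathbb{R}$.

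\textbf{Direct integral over streamlines.} Next, I would use the co-area formula to set up the unitary identification
\begin{equation*}
L^2(\Omega) \;\cong\; \int_{\mathbb{R}}^{\oplus} L^2\bigl(\psi_0^{-1}(c),\, |\nabla\psi_0|^{-1}\,d\mathcal{H}^1\bigr)\, dc,
\end{equation*}
valid after excising the (Sard--null) set of critical values of $\psi_0$. Because $\Lo$ differentiates along the Hamiltonian field $\np\psi_0$, which is tangent to every level set, it decomposes as a measurable direct integral $\Lo \cong \int^{\oplus} \Lo^{(c)}\, dc$. Under the no--arbitrarily--long--trajectories hypothesis each regular level set is a finite disjoint union of closed orbits, and parametrizing an orbit of period $T$ by flow time turns $\Lo^{(c)}$ into $d/dt$ on $\mathbb{R}/T\mathbb{Z}$, with spectrum $2\pi i \mathbb{Z}/T$. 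Since the spectrum of a measurable direct integral of self-adjoint operators is the essential closure of its fibre spectra, assembling the pieces yields exactly the right-hand side of \eqref{spec}; the passage to $X_0$ only quotients out the globally constant functions, whose contribution at $\lambda = 0$ lies in the closed set anyway.

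\textbf{The remaining case and main obstacle.} When the flow has arbitrarily long trajectories I would construct Weyl sequences directly: given $\lambda\in\mathbb{R}$, a trajectory segment of length $L_n \to \infty$ admits Fermi tube coordinates $(t,y)$ in which the quasimode $u_n = L_n^{-1/2} e^{i\lambda t}\chi(t/L_n)\phi(y)$ satisfies $\|(\Lo - i\lambda)u_n\|\to 0$ while $\|u_n\|$ stays bounded below, forcing $i\lambda\in\sigma_{\text{ess}}(\Lo)$ (this is essentially the argument of \cite{LS03}). The principal technical obstacle I anticipate lies in Step~2: making the direct integral picture rigorous across critical values of $\psi_0$---elliptic centers, hyperbolic saddles, and the separatrices joining them---where level sets degenerate and the period function $T(c)$ typically diverges or jumps. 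Controlling these singularities is precisely what dictates the closure operation in \eqref{spec}, and isolating their effect is what distinguishes the present geometric approach from the long-trajectory-based proof of \cite{LS03}.
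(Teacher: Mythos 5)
You follow essentially the same route as the paper---Weyl's theorem applied to the compact operator $\K$, then a co-area/direct-integral diagonalization of the transport operator with fibres $-\frac{2\pi i}{T}\frac{d}{d\theta}$ on $L^2(\mathbb{S}^1)$, and quasimodes along long trajectories---but two steps in your outline are genuinely incomplete. First, the direct-integral theorem computes the full spectrum $\sigma(\Lo)$, whereas Weyl's theorem only reduces the problem to $\sigma_{\textrm{ess}}(\Lo)$; to recover the equality in (\ref{spec}) you must still show that no point of $\sigma(\Lo)$ is an isolated eigenvalue of finite multiplicity. This is not vacuous: when the period function is constant (rigid rotation) the right-hand side of (\ref{spec}) consists entirely of isolated points. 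The fix is short---if $\Lo\omega=\lambda\omega$ then $\psi_0^n\omega$ is again an eigenfunction for every $n$, and these are linearly independent, so every eigenvalue has infinite multiplicity (the paper devotes a lemma to exactly this)---but it has to be said.

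Second, your claimed unitary identification of $L^2(\Omega)$ with the direct integral ``after excising the Sard-null set of critical values'' is false as stated: removing a null set of \emph{values} of $\psi_0$ does not remove a null subset of $\Omega$, because the critical set $\{\nabla\psi_0=0\}$ can have positive area (e.g.\ $\psi_0$ constant on an open subdomain), and the aperiodic orbits must also be accounted for. This is precisely the difficulty you defer to as the ``principal obstacle,'' and the paper's answer is not to control the degenerate level sets but to discard them: it restricts $\Lo$ to the open set $\widehat{\Omega}$ swept out by the nontrivial periodic orbits and proves, via explicit extension/restriction resolvent identities, that $\sigma(\Lw;\widehat{X})=\sigma(\Lo;X)$ whenever the set of aperiodic non-fixed points is null (hypothesis (H)); the discarded region contributes only the infinite-multiplicity eigenvalue $0$, which already lies in (\ref{spec}). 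Hypothesis (H) is automatic in the regime of the first statement, since the paper's final argument shows that failure of (H) produces orbits of arbitrarily large length. Once the problem is posed on $\widehat{\Omega}$, where every level component is a circle and $T$ is continuous on the index set of streamlines, the closure in (\ref{spec}) falls out of the essential-closure description of direct-integral spectra (Reed--Simon, Theorem XIII.85), not out of an analysis of separatrices. So your architecture matches the paper's, but these two reductions---spectrum versus essential spectrum, and the restriction to $\widehat{\Omega}$---are substantive content that your proposal leaves out.
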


When the set of periods comprises an interval $(T_{\textrm{min}}, T_{\textrm{max}})$, the expression in (\ref{spec}) reduces to
\begin{align*}
	i \sigma_{\textrm{ess}}(\Lv; X_0) = \left( \bigcup_{-k \in \mathbb{N}} \left[ \frac{2 \pi k}{T_{\textrm{min}}}, \frac{2 \pi k}{T_{\textrm{max}}} \right]  \right) \cup \{0\} \cup \left( \bigcup_{k \in \mathbb{N}} \left[ \frac{2 \pi k}{T_{\textrm{max}}}, \frac{2 \pi k}{T_{\textrm{min}}} \right] \right),
\end{align*}
and so when $T_{\textrm{min}} = 0$ we find that
\begin{align*}
	i\sigma_{\textrm{ess}}(\Lv) = \left(-\infty, -\frac{2 \pi}{T_{\textrm{max}}} \right] \cup \{0 \} \cup \left[ \frac{2 \pi}{T_{\textrm{max}}}, \infty \right).
\end{align*}
In general, there are at least two spectral gaps along the imaginary axis when the flow does not have arbitrarily long trajectories, and it is not difficult to compute the exact number of gaps in terms of the maximum and minimum periods. In particular, the number of gaps is always finite, except in the degenerate case $T_{\textrm{min}} = T_{\textrm{max}}$. The spectral inclusion theorem (see \cite{EN00}) thus yields the following result for the corresponding evolution semigroup.

\begin{cor} If $T_{\textrm{min}} \neq T_{\textrm{max}}$, then the spectrum of $e^{t \Lv}$ contains the unit circle.
\end{cor}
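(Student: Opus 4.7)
The plan is to apply the spectral inclusion theorem from \cite{EN00}, which asserts that $e^{t\sigma(\Lv)} \subseteq \sigma(e^{t\Lv})$ for every $t \geq 0$. Since $\sigma_{\textrm{ess}}(\Lv;X_0) \subseteq \sigma(\Lv)$, it suffices to show that $e^{t\sigma_{\textrm{ess}}(\Lv;X_0)}$ contains the entire unit circle $S^1$ for each $t > 0$.

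By Theorem~\ref{thm:main} and the explicit description in the paragraph following it, when the set of periods is a non-trivial interval $(T_{\textrm{min}}, T_{\textrm{max}})$ the essential spectrum contains the imaginary segment
\[
I_k := i\bigl[2\pi k/T_{\textrm{max}},\, 2\pi k/T_{\textrm{min}}\bigr]
\]
for every positive integer $k$. The length of $I_k$ equals $2\pi k(T_{\textrm{min}}^{-1} - T_{\textrm{max}}^{-1})$, which is strictly positive by hypothesis and grows linearly in $k$. For fixed $t > 0$, the image of $I_k$ under $\lambda \mapsto e^{t\lambda}$ is an arc of $S^1$ of angular extent $2\pi k t(T_{\textrm{min}}^{-1} - T_{\textrm{max}}^{-1})$. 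Choosing
\[
k \geq \frac{1}{t\bigl(T_{\textrm{min}}^{-1} - T_{\textrm{max}}^{-1}\bigr)}
\]
forces the angular extent to be at least $2\pi$, so the arc wraps completely around and $e^{tI_k} = S^1$. Combining with the spectral inclusion yields $S^1 \subseteq \sigma(e^{t\Lv})$.

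The argument is essentially a direct consequence of the rich structure uncovered in Theorem~\ref{thm:main}: the harmonics $2\pi k/T$ pack $\sigma_{\textrm{ess}}$ with imaginary intervals of unbounded length, so even a very short time $t$ suffices to wrap the exponential all the way around $S^1$ once $k$ is taken large enough. The only point requiring care---more of interpretation than of proof---is that the above reasoning works directly when the period set $\mathcal{T}$ is itself an interval; for more pathological $\mathcal{T}$ satisfying merely $T_{\textrm{min}} \neq T_{\textrm{max}}$, one would first verify that the closure of $\{2\pi k/T : k \in \mathbb{Z},\, T \in \mathcal{T}\}$ still contains subsets of $i\mathbb{R}$ of arbitrarily large extent (for instance by exhibiting two periods with irrational ratio, or by invoking continuity of the period on a one-parameter family of orbits) before the wrap-around argument applies.
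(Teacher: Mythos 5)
Your argument is correct and is exactly the route the paper intends: the paper simply cites the spectral inclusion theorem of \cite{EN00} together with the structure of $\sigma_{\textrm{ess}}(\Lv;X_0)$ from Theorem~\ref{thm:main} (arbitrarily long imaginary intervals, finitely many gaps), and your explicit wrap-around computation with the intervals $I_k$ fills in precisely those details. The closing caveat about period sets that are not a single interval matches the paper's own implicit restriction to the case discussed before the corollary, so no essential difference in approach remains.
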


Therefore the spectrum of the semigroup will contain the entire unit circle (except in the degenerate case described above), whether the flow has arbitrarily long trajectories or not.


\section{Definitions and notation}
\label{sec:def}
We let $X = L^2(\Omega)$ and
\begin{align}
	D := \{\omega \in L^2(\Omega) : \Lo \omega \in L^2(\Omega) \},
	\label{dom}
\end{align}
with $X_0$ and $D_0$ the respective subspaces of zero mean functions. The inclusion $\Lo \omega \in L^2(\Omega)$ in (\ref{dom}) is meant distributionally, i.e. there exists $f \in L^2(\Omega)$ such that
\begin{align}
	\int_{\Omega} f \phi = - \int_{\Omega} \omega \left( \Lo \phi \right)
	\label{eqn:Lweak}
\end{align}
for all $\phi \in C^{\infty}(\bar{\Omega})$. If $\omega \in C^1(\bar{\Omega})$, then $f = \Lo \omega$ classically. To see this, it suffices to note that $\text{div}(\omega \phi \nabla^{\bot} \psi_0) = \omega \Lo \phi + \phi \Lo \omega$, and so the boundary term that comes from Stokes' theorem is proportional to $\nabla^{\bot} \psi_0 \cdot \bn$, which vanishes by assumption. It will be shown below that $\omega \in X_0$ implies $\K \omega \in X_0$, hence $\Lv \omega \in L^2(\Omega)$ if and only if $\Lo \omega \in L^2(\Omega)$. We are thus justified in defining $\Lv$ as an unbounded operator on $X_0$ with domain $D_0$.

We define the essential spectrum, $\sigma_{\textrm{ess}}(\Lv; X_0)$, to be the set of all $\lambda \in \mathbb{C}$ such that $\Lv - \lambda I$ is not Fredholm. (For a discussion of alternate definitions of the essential spectrum, see \cite{EE87} and \cite{K95}.)

For $\psi_0 \in C^3(\bar{\Omega})$ a steady state of the 2D Euler equation, let $\widehat{\Omega}$ denote the union of (the images of) all nontrivial periodic orbits for the flow generated by $\nabla^{\bot} \psi_0$, and $\Omega_0 = \{ \nabla \psi_0 = 0 \}$ the set of all fixed points.

\begin{lemma} With $\psi_0$ as above, $\widehat{\Omega}$ is an open subset of $\Omega$.
\end{lemma}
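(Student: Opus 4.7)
The plan is to exploit the Hamiltonian structure of the flow: since $\np \psi_0$ is perpendicular to $\nabla \psi_0$, the function $\psi_0$ is conserved along every orbit, and this rigidly constrains the local behavior near any periodic orbit. Let $p \in \widehat{\Omega}$, so $p$ lies on a nontrivial periodic orbit $\gamma$ of period $T > 0$. Since $\gamma$ is not a fixed point, $\nabla \psi_0(p) \neq 0$, and by continuity ($\psi_0 \in C^3$) we may choose an open neighborhood $U \subset \Omega$ of $p$ on which $\nabla \psi_0$ is nonvanishing.

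Next I would construct a Poincar\'e section. Pick a short smooth arc $\Sigma \subset U$ passing through $p$ in the direction of $\nabla \psi_0(p)$. Then $\Sigma$ is transverse to the flow vector $\np \psi_0(p)$, and $\psi_0$ is strictly monotonic along $\Sigma$ after possibly shrinking it. By continuous dependence of solutions on initial data, the first-return map $P : \Sigma \to \Sigma$ is well-defined on a neighborhood of $p$ in $\Sigma$, with $P(p) = p$ and return times close to $T$.

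The key step is to observe that $P$ preserves $\psi_0$, because the flow does. Combined with the strict monotonicity of $\psi_0$ along $\Sigma$, this forces $P$ to be the identity wherever it is defined: the only way to return to $\Sigma$ at the same value of $\psi_0$ is to return to the same point. Hence every point of $\Sigma$ sufficiently close to $p$ lies on a closed orbit, and such an orbit is nontrivial since $\nabla \psi_0 \neq 0$ throughout $U$. A standard flow-box argument then upgrades this from a neighborhood in $\Sigma$ to a full two-dimensional neighborhood of $p$: every nearby point can be flowed to $\Sigma$ in short time, hence lies on a nontrivial periodic orbit, so $p$ is interior to $\widehat{\Omega}$.

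The main technical obstacle is not deep but merits care: verifying that the return map is genuinely defined on a full neighborhood of $p$ in $\Sigma$ (rather than only at $p$ itself), which follows from the $C^3$ regularity of $\psi_0$ and continuous dependence applied on the compact orbit $\gamma$. Everything else is a direct consequence of conservation of $\psi_0$ under the flow together with the transversal monotonicity of $\psi_0$ along $\Sigma$.
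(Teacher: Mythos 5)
Your argument is correct, but it takes a different route from the paper. The paper's proof is level-set theoretic: it shrinks to an open set $S\supset\Gamma$ on which $\nabla\psi_0\neq 0$ and on which $\Gamma$ is the only component of its level set, then invokes the standard normalized-gradient-flow construction (citing Milnor) to produce a tubular neighborhood $\psi_0|_S^{-1}(a-\epsilon,a+\epsilon)$ foliated by level circles diffeomorphic to $\Gamma$; since $\psi_0$ is a first integral and these circles contain no fixed points, each is a nontrivial periodic orbit, giving an open neighborhood of $\Gamma$ inside $\widehat{\Omega}$. You instead argue dynamically and locally at $p$: a transversal $\Sigma$ along $\nabla\psi_0(p)$, the Poincar\'e return map defined near $p$ (via continuous dependence along the compact orbit), the observation that conservation of $\psi_0$ plus strict monotonicity of $\psi_0|_\Sigma$ forces the return map to be the identity, and finally a flow-box argument to upgrade to a two-dimensional neighborhood. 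Both proofs ultimately rest on the same two facts---$\nabla\psi_0\neq 0$ near the orbit and invariance of $\psi_0$ under the flow---but yours is more elementary and self-contained (no Morse-theoretic tubular neighborhood needed), at the price of having to justify the existence of the return map on a full neighborhood of $p$ in $\Sigma$, which you correctly identify as the one technical point and which follows from the standard transversality/continuous-dependence argument since the vector field is $C^2$. The paper's approach yields slightly more structure (an explicit tubular neighborhood fibered by nearby periodic orbits, essentially the picture behind the coarea decomposition used later), whereas yours isolates the minimal dynamical input needed for openness.
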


\begin{proof}
Let $x_0 \in \widehat{\Omega}$ be a point on a nontrivial periodic orbit $\Gamma$, which by assumption is diffeomorphic to $\mathbb{S}^1$, and contains no fixed points. Thus by continuity $\nabla \psi_0$ is nonzero on an open set $S \subset \Omega$ that contains $\Gamma$. Let $a = \psi_0(x_0)$.  We can assume (by shrinking $S$ if necessary) that $\psi_0|_S^{-1}(a) = \Gamma$, i.e. $\Gamma$ is the only connected component of the level set $\psi_0^{-1}(a)$ that is contained in $S$.

A standard argument using the normalized gradient flow of $\psi_0$ (see \cite{M63} for details) then shows that, for some $\epsilon > 0$, $\psi_0|_S^{-1} (a - \epsilon, a + \epsilon)$ forms a tubular neighborhood around $\Gamma$, with $\psi_0|_S^{-1}(a+t)$ diffeomorphic to $\Gamma$ for $|t| < \epsilon$. We have thus constructed an open set in $\Omega$, consisting of the images of nontrivial periodic orbits, that contains $\Gamma$ (and hence $x$), so the proof is complete.
\end{proof}

In proving Theorem \ref{thm:main} it will sometimes be convenient to impose the following assumption on the geometry of the flow:
\begin{align*}
 	\textrm{the set  } \Omega \setminus \left( \widehat{\Omega} \cup \Omega_0 \right) \textrm{ has measure zero in } \Omega. \tag{H}
\label{hyp:period}
\end{align*}
Since every trajectory must be diffeomorphic to a point, a circle or the real line, we are thus assuming that the image of the set of trajectories diffeomorphic to the line has measure zero. 

We then define $\widehat{X} := L^2(\widehat{\Omega})$, and let $\Lw$ denote the restriction of $\Lo$ to $\widehat{X}$, with domain
\begin{align}
	\widehat{D} =  \{ \omega \in L^2(\widehat{\Omega}) : \Lw \omega \in L^2(\widehat{\Omega}) \},
\end{align}
where $\Lw$ is defined distributionally, as in (\ref{eqn:Lweak}), using smooth test functions on the closure of $\widehat{\Omega}$. This agrees with the classical definition of $\Lw$ for functions of class $C^1$ on the closure of $\widehat{\Omega}$, though the argument is more subtle than before (\textit{cf.} the discussion following (\ref{eqn:Lweak})) because the boundary of $\widehat{\Omega}$ is not necessarily smooth. To prove this, it suffices to examine a component of $\widehat{\Omega}$, say $\psi_0^{-1}(a,b)$, and observe that
\begin{align*}
	\int_{\psi_0^{-1}(a+\epsilon, b-\epsilon) } (\Lo \omega) \phi = -\int_{\psi_0^{-1}(a+\epsilon, b-\epsilon) } \omega \Lo \phi
\end{align*}
for any $\epsilon > 0$ by Stokes' theorem, because the boundary term is proportional to $\nabla^{\bot} \psi_0 \cdot \bn$ and hence vanishes on any level set of $\psi_0$. This equality holds in the $\epsilon = 0$ limit because $\omega$ and $\phi$ (and their first derivatives) are bounded on the closure of $\widehat{\Omega}$.

We define the index set $\mathcal{J}$ to be the disjoint union
\begin{align}
	\mathcal{J} := \coprod_{i} \psi_0 \left( \widehat{\Omega}_i \right)
	\label{eqn:index}
\end{align}
where $\{ \widehat{\Omega}_i \}$ are the connected components of $\widehat{\Omega}$. This set records all values assumed by the stream function $\psi_0$ on periodic orbits, with multiplicity. The stream function is nonsingular on $\widehat{\Omega}$, hence the index set is a disjoint union of open intervals. We can thus define a measure and a topology on $\mathcal{J}$ that agree with the Lebesgue measure and Euclidean topology on each component of the disjoint union.

For a simple example, suppose $\Omega = B_{2\pi}(0) \subset \mathbb{R}^2$ and $\psi_0 = \cos r$ in polar coordinates. Then $\Omega_0 = \{0\} \cup \{r = \pi \}$ and $\widehat{\Omega} = \{0 < r < \pi \} \cup \{\pi < r < 2\pi \} = \widehat{\Omega}_1 \cup \widehat{\Omega}_2$, so we find that
\begin{align*}
	\mathcal{J} =  (-1,1) \coprod (-1,1).
\end{align*}

It is important to note that this is a disjoint union, so the stream function $\psi_0$ gives a bijection between $\mathcal{J}$ and the set of periodic orbits. The period function, $T: \mathcal{J} \rightarrow (0,\infty)$, is then defined by letting $T(\rho)$ be the period of the orbit corresponding to $\rho \in \mathcal{J}$. The period function is not necessarily injective, so the preimage under $T$ of a set in $(0,\infty)$ could contain elements in different components of the disjoint union defining $\mathcal{J}$.


\section{Analytic preliminaries}
In this section we relate the essential spectrum of $\Lv$ to the spectrum of $\Lw$. This is done because $\Lw$ can be easily understood using the geometric techniques developed in Section \ref{sec:geom}.

\begin{prop} The spectrum satisfies $\sigma(\Lw; \widehat{X}) \subseteq \sigma_{\textrm{ess}}(\Lv; X_0)$, with equality if (\ref{hyp:period}) is satisfied.
\end{prop}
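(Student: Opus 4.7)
The plan has two main reductions. First, I would show that $\K$ is compact on $X_0$: the factorization $\curl^{-1}=\nabla^{\bot}\Delta^{-1}$ sends $L^2(\Omega)$ continuously into $H^1(\Omega)$ by elliptic regularity, multiplication by the continuous function $\nabla\omega_0$ preserves $L^2$, and Rellich compactness of $H^1(\Omega)\hookrightarrow L^2(\Omega)$ gives compactness of $\K$. Weyl's theorem on compact perturbations then yields $\sigma_{\textrm{ess}}(\Lv; X_0) = \sigma_{\textrm{ess}}(\Lo; X_0)$, so it suffices to compare $\Lo$ on $X_0$ with $\Lw$ on $\widehat{X}$. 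The link between the two is geometric: $\partial\widehat{\Omega}$ is a union of level sets of $\psi_0$, so $\np\psi_0\cdot\bn=0$ on its smooth part, and the zero-extension map $\omega\mapsto\tilde\omega$ sends $\widehat{D}$ into $D$ with $\Lo\tilde\omega = \widetilde{\Lw\omega}$ distributionally. The verification mirrors the argument given just after (\ref{eqn:Lweak}): integrate by parts on the exhaustion $\psi_0^{-1}(a+\epsilon,b-\epsilon)$ of each component of $\widehat{\Omega}$ and pass to the limit.

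For the containment $\sigma(\Lw)\subseteq\sigma_{\textrm{ess}}(\Lv;X_0)$, the key is that $\Lw$ is skew-adjoint on $\widehat{X}$ (integrating $\np\psi_0\cdot\nabla$ by parts produces only boundary terms on level sets of $\psi_0$, where $\np\psi_0\cdot\bn=0$), so $\sigma(\Lw)$ coincides with its approximate point spectrum. Given $\lambda\in\sigma(\Lw)$, choose unit vectors $\omega_n\in\widehat{D}$ with $(\Lw-\lambda)\omega_n\to 0$; passing to a subsequence we may assume $\omega_n\rightharpoonup 0$ weakly in $\widehat{X}$, since $\widehat{X}$ is infinite-dimensional. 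Setting $c_n := |\Omega|^{-1}\int_\Omega \tilde\omega_n$ gives $c_n\to 0$, and $\omega_n' := \tilde\omega_n - c_n$ lies in $D_0$ with $\|\omega_n'\|\to 1$ and
\begin{equation*}
(\Lo - \lambda)\omega_n' = \widetilde{(\Lw-\lambda)\omega_n} + \lambda c_n \longrightarrow 0
\end{equation*}
in $L^2(\Omega)$. Since $\omega_n'\rightharpoonup 0$, it has no norm-convergent subsequence, so this is a singular Weyl sequence and $\lambda\in\sigma_{\textrm{ess}}(\Lo;X_0)$.

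Under (H), the reverse containment follows from an orthogonal decomposition: $\Omega = \widehat{\Omega}\cupdot\Omega_0$ modulo a null set, so $L^2(\Omega) = \widehat{X}\oplus L^2(\Omega_0)$, and because $\np\psi_0$ vanishes on $\Omega_0$ the operator $\Lo$ splits as $\Lw\oplus 0$. Hence $\sigma(\Lo;L^2(\Omega)) = \sigma(\Lw)\cup\{0\} = \sigma(\Lw)$, since $0\in\sigma(\Lw)$ always (its kernel is infinite-dimensional). Restricting to the codimension-one $\Lo$-invariant subspace $X_0$ cannot enlarge this spectrum, so $\sigma_{\textrm{ess}}(\Lv;X_0) = \sigma_{\textrm{ess}}(\Lo;X_0) \subseteq \sigma(\Lw)$. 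The main obstacle throughout is the zero-extension identity $\Lo\tilde\omega=\widetilde{\Lw\omega}$: controlling the distributional derivative across $\partial\widehat{\Omega}$ where it meets the critical set $\Omega_0$ is delicate, because there the boundary need not be smooth and $\nabla\psi_0$ degenerates. This is the same subtlety already flagged in defining $\widehat{D}$, and handling it through an exhaustion/truncation argument rather than a direct Stokes calculation is the technical heart of the proposition.
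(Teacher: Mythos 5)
The gap is in the unconditional containment $\sigma(\Lw;\widehat{X})\subseteq\sigma_{\textrm{ess}}(\Lv;X_0)$. From $\lambda\in\sigma(\Lw)$, skew-adjointness does give unit vectors $\omega_n\in\widehat{D}$ with $(\Lw-\lambda)\omega_n\to0$, but your next step --- ``passing to a subsequence we may assume $\omega_n\rightharpoonup 0$ weakly in $\widehat{X}$, since $\widehat{X}$ is infinite-dimensional'' --- is not justified and is false in general: a bounded sequence has a weakly convergent subsequence, but nothing forces the weak limit to vanish. If $\lambda$ were an isolated eigenvalue of $\Lw$ of finite multiplicity, every normalized approximate-eigenvector sequence clusters in norm at the finite-dimensional eigenspace, so no weakly null (singular) Weyl sequence exists at all; your construction then only shows $\lambda\in\sigma(\Lo;X_0)$, not $\lambda\in\sigma_{\textrm{ess}}(\Lo;X_0)$. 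Since the whole point is to land in the \emph{essential} spectrum, this is precisely the case you must rule out, and the proposal contains no argument for it.

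The missing idea is the paper's lemma that $\sigma(\Lo;X_0)=\sigma_{\textrm{ess}}(\Lo;X_0)$, proved by showing no eigenvalue has finite multiplicity: if $\Lo\omega=\lambda\omega$ then $\Lo(\psi_0^n\omega)=\lambda\psi_0^n\omega$, and the family $\{\psi_0^n\omega\}$ is linearly independent because $\psi_0$ assumes uncountably many values on $\{\omega\neq0\}$ (for this the paper runs a level-set/coarea argument; on $\widehat{\Omega}$ it is immediate since $\nabla\psi_0\neq0$ there). With that in hand your route can be repaired in either of two ways: follow the paper's chain $\sigma(\Lw)\subseteq\sigma(\Lo;X)=\sigma(\Lo;X_0)=\sigma_{\textrm{ess}}(\Lo;X_0)=\sigma_{\textrm{ess}}(\Lv;X_0)$, where the first inclusion is obtained from the extension/restriction resolvent identities rather than Weyl sequences; or keep your transfer-by-zero-extension argument but first prove that every eigenvalue of $\Lw$ has infinite multiplicity (same $\psi_0^n$ trick on $\widehat{\Omega}$), so that an orthonormal, hence weakly null, sequence of eigenfunctions is available, while non-eigenvalue or non-isolated spectral points admit singular sequences by the standard self-adjoint Weyl criterion. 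The rest of your proposal is sound and close to the paper: compactness of $\K$ plus Weyl's theorem, the identity $\Lo E f=E\Lw f$ proved by exhaustion of $\widehat{\Omega}$ by level sets, and, under (H), the splitting of $\Lo$ as $\Lw\oplus 0$ giving $\sigma(\Lo;X)=\sigma(\Lw)$, which is equivalent content to the paper's resolvent formulas. Note only that ``restricting to $X_0$ cannot enlarge the spectrum'' should be justified by the fact that the constants are also $\Lo$-invariant, so $\Lo$ block-diagonalizes over $X_0\oplus\mathbb{R}$.
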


This proposition will be proved by a number of straightforward lemmas which together establish the string of equalities
\begin{align*}
	\sigma(\Lw; \widehat{X})  =\sigma(\Lo; X) = \sigma(\Lo; X_0) = \sigma_{\textrm{ess}}(\Lo; X_0) = \sigma_{\textrm{ess}}(\Lv; X_0).
\end{align*}
We first observe that $T := i \Lo$ is self-adjoint. 

\begin{lemma} The operator $T := i \Lo$, with domain $D(T) := D_0$, is self-adjoint.
\end{lemma}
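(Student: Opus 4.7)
The plan is to prove self-adjointness by verifying two inclusions: that $T$ is symmetric on $D_0$, and that $D(T^*) \subseteq D_0$ with $T^*$ agreeing with $T$ there. Together these give $T = T^*$.

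For symmetry, since $\np\psi_0$ is divergence-free, the product rule reads
\[
    \dv(\omega\bar\phi\,\np\psi_0) = (\Lo\omega)\bar\phi + \omega(\Lo\bar\phi),
\]
and the boundary term produced by Stokes' theorem vanishes because $\np\psi_0 \cdot \bn = 0$ on $\partial\Omega$. Since $\Lo$ has real coefficients, the resulting identity $\int(\Lo\omega)\bar\phi + \int\omega\,\overline{\Lo\phi} = 0$ is precisely $\langle T\omega,\phi\rangle = \langle\omega,T\phi\rangle$, valid for smooth $\omega,\phi$. When $\phi \in C^\infty(\bar\Omega)\cap X_0 \subset D_0$ is smooth but $\omega \in D_0$ is general, the same identity follows from the distributional definition (\ref{eqn:Lweak}) applied to the test function $\bar\phi$. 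For a general $\phi \in D_0$, I would approximate $\phi$ in the graph norm by smooth zero-mean functions and pass to the limit.

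For the adjoint inclusion, suppose $\omega \in D(T^*)$ with $T^*\omega = g \in X_0$, so $\langle T\phi,\omega\rangle = \langle\phi,g\rangle$ for all $\phi \in D_0$. Restricting to $\phi \in C^\infty(\bar\Omega) \cap X_0$ yields $i\int(\Lo\phi)\bar\omega = \int\phi\bar g$, and the identity extends to arbitrary $\phi \in C^\infty(\bar\Omega)$ because $\Lo$ annihilates constants and $g$ has zero mean. Taking conjugates and using that $\Lo$ has real coefficients rewrites this as $\int\omega\,\Lo\bar\phi = i\int\bar\phi\,g$ for all smooth $\phi$, which is precisely the statement that $\omega$ has distributional $\Lo$-derivative equal to $-ig \in L^2$. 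Hence $\omega \in D_0$ and $T\omega = i\Lo\omega = g = T^*\omega$.

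The main obstacle is the density fact used in the symmetry step: that $C^\infty(\bar\Omega)\cap X_0$ is dense in $D_0$ with respect to the graph norm $\|\cdot\|_{L^2} + \|\Lo\cdot\|_{L^2}$. This is a Friedrichs-style mollification result for first-order operators with smooth coefficients, and the tangency condition $\np\psi_0 \cdot \bn = 0$ is precisely what makes it go through near $\partial\Omega$: either a standard mollifier combined with a local boundary-straightening (in which $\np\psi_0$ becomes horizontal) or, better, composition with the flow $\Phi_t$ of $\np\psi_0$, produces a family $J_\epsilon$ for which the commutator $[\Lo,J_\epsilon]$ stays $L^2$-bounded and $J_\epsilon\omega \to \omega$ in the graph norm. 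As an alternative route that avoids density altogether, Stone's theorem applied to the strongly continuous unitary group $U(t)\omega = \omega \circ \Phi_t$ on $X_0$ (unitary by area-preservation of the Hamiltonian flow) produces a self-adjoint generator whose identification with $-T$ again reduces to the distributional computation above.
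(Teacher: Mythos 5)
Your second paragraph (the inclusion $D(T^*)\subseteq D_0$ with $T^*\omega = T\omega$) is, step for step, the paper's entire proof: restrict the defining identity to smooth zero-mean test functions, extend to all of $C^\infty(\bar\Omega)$ using that $T$ kills constants while the candidate adjoint vector has zero mean, and read off membership in $D_0$ from the distributional definition (\ref{eqn:Lweak}); your sign bookkeeping there is correct. Where you genuinely depart from the paper is in treating the symmetry direction $T\subseteq T^*$ as a separate task: the paper proves only $T^*\subseteq T$ and then asserts equality, implicitly taking symmetry on the maximal domain $D_0$ for granted. You are right that this direction is not free, since (\ref{eqn:Lweak}) gives integration by parts only when one of the two factors is smooth, so pairing two general elements of $D_0$ needs either graph-norm density of smooth functions in $D_0$ (a Friedrichs-type commutator lemma, where the tangency $\np\psi_0\cdot\bn=0$ is exactly what rescues the mollification at the boundary) or the unitary-group route. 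Both of your patches are viable but are left as sketches; of the two, Stone's theorem is the cleaner way to a complete argument, with one caveat: identifying the generator with $\Lo$ on all of $D_0$ is not purely the distributional computation. The easy direction gives only that the generator is a restriction of the maximal operator; the reverse inclusion $D_0\subseteq D(\text{generator})$ follows from a short Duhamel argument, testing against smooth functions to get $U(t)u-u=\int_0^t U(s)\,\Lo u\,ds$ for $u\in D_0$ and then passing to the difference quotient. So your proof is correct in strategy, coincides with the paper on the step the paper actually carries out, and is more explicit than the paper about the symmetric direction --- at the cost of leaving the density (or generator-identification) lemma sketched rather than proved; to be fully self-contained you should carry out one of those two sketches.
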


\begin{proof}
Suppose $u \in D(T^*)$. Then by definition there exists $u^* \in X_0$ so that $\left<u, T \omega \right> = \left<u^*, \omega \right>$ for all $\omega \in D(T)$, hence for all $\omega \in C^{\infty}(\bar{\Omega}) \cap X_0$. Since $T$ vanishes on constant functions and $\left<u^*,1\right> = 0$ for $u^* \in X_0$, we have by linearity that $\left<u, T \omega \right> = \left<u^*, \omega \right>$ for all $\omega \in C^{\infty}(\bar{\Omega})$. This means
\begin{align*}
	\int_{\Omega} u^* \omega  = -i \int_{\Omega} u \left( \Lo \omega \right)
\end{align*}
and so by definition $u \in D(T)$ and $T^*u := u^* = i \Lo u$. We thus have $D(T) = D(T^*)$ and $T = T^*$ as claimed.
\end{proof}

In particular, this implies that $\Lo$ is a closed operator, so we can use a suitable form of Weyl's theorem to simplify our computation of the essential spectrum.

\begin{lemma} The essential spectrum satisfies $\sigma_{\textrm{ess}}(\Lv; X_0) = \sigma_{\textrm{ess}}(\Lo; X_0)$.
\label{lem:compact}
\end{lemma}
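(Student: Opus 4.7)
The plan is to apply a version of Weyl's theorem: if $A$ is closed and densely defined and $K$ is bounded and compact, then $\sigma_{\textrm{ess}}(A + K) = \sigma_{\textrm{ess}}(A)$ for the Fredholm-defined essential spectrum used here. The previous lemma established that $\Lo$ is closed on $D_0$, and by construction $\Lv - \Lo = \K$, so the entire argument reduces to showing that $\K : X_0 \to X_0$ is compact.

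For compactness, I would exploit the smoothing of $\curl^{-1} = \np \circ \Delta^{-1}$. Standard elliptic regularity for the boundary-value problem encoded in (\ref{BC}) gives $\Delta^{-1} : X_0 \to H^2(\Omega)$ bounded, so $\curl^{-1} : X_0 \to H^1(\Omega; \mathbb{R}^2)$ is bounded. The hypothesis $\psi_0 \in C^3(\bar\Omega)$ yields $\nabla \omega_0 \in C^1(\bar\Omega)$, in particular bounded, so the pointwise dot product with $\nabla \omega_0$ is a bounded map $L^2(\Omega; \mathbb{R}^2) \to L^2(\Omega)$. Composing with the compact Rellich--Kondrachov embedding $H^1 \hookrightarrow L^2$ then shows $\K : X_0 \to L^2(\Omega)$ is compact. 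To verify the image actually lands in $X_0$, I would use the pointwise identity $-\nabla \omega_0 \cdot \np \psi = \nabla \cdot (\psi \np \omega_0)$, which follows from $\nabla \cdot \np = 0$, together with the divergence theorem; the resulting boundary contributions are proportional to the integrals $\int_{\Sigma_i} \np \omega_0 \cdot \bn \, ds$, each of which equals the integral of a tangential derivative of the single-valued function $\omega_0$ around a closed curve and therefore vanishes. (In the torus case there is no boundary, and in the cylinder case the two circular boundary components are handled identically.)

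With $\K$ bounded and compact on $X_0$ and $\Lo$ closed, Weyl's theorem then delivers $\sigma_{\textrm{ess}}(\Lv; X_0) = \sigma_{\textrm{ess}}(\Lo; X_0)$. The only genuinely technical ingredient is the $H^2$-regularity of $\Delta^{-1}$ under the mixed conditions (\ref{BC}), which prescribe $\psi$ to be constant on each component of $\partial \Omega$ (with unspecified constants) together with a flux compatibility condition. These are not classical Dirichlet or Neumann data, but they comprise a coercive elliptic problem amenable to a standard difference-quotient argument at the boundary; this is the main technical checkpoint in the plan, while everything else is routine functional analysis.
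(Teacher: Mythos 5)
Your proposal is correct and takes essentially the same route as the paper: Weyl's theorem (in the form cited from Kato) applied to the closed operator $\Lo$, with compactness of $\K:X_0\to X_0$ obtained from the $H^2$ bound for $\Delta^{-1}$ under the boundary conditions (\ref{BC}), the Rellich--Kondrachov embedding, and the boundedness of $\nabla\omega_0$ (from $\psi_0\in C^3$). The only cosmetic difference is in verifying that the range of $\K$ lies in $X_0$: you write $\K\omega=\nabla\cdot\bigl(\psi\,\np\omega_0\bigr)$ and use $\psi|_{\Sigma_i}$ constant together with the single-valuedness of $\omega_0$ around each closed boundary curve, whereas the paper writes $\K\omega=-\dv\bigl[\omega_0\,\np(\Delta^{-1}\omega)\bigr]$ and uses the constancy of $\Delta^{-1}\omega$ on each $\Sigma_i$ directly---both are the same Stokes-theorem argument.
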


\begin{proof} 
By definition we have $\Lv = \Lo + K$, so the result will follow from Weyl's theorem (Theorem 5.35 of \cite{K95}) once we establish the compactness of $K: X_0 \rightarrow X_0$.

For each $\omega \in X_0$, there is a unique zero mean solution $\psi$ to the equation $-\Delta \psi = \omega$, subject to the boundary conditions (\ref{BC}). Moreover, we have $\psi \in H^2(\Omega)$, with the uniform estimate
\begin{align*}
	\| \psi \|_{H^2(\Omega)} \leq C \| \omega \|_{L^2(\Omega)}.
\end{align*}
(This is an immediate consequence of the results in Sections 7.E--7.G of \cite{F95}, applied to the Dirichlet form
\begin{align*}
	D(u,v) = \int_{\Omega} \nabla u \cdot \nabla v
\end{align*}
on the space $\mathcal{X} = \{\psi \in H^1(\Omega) : \psi|_{\Sigma_i} = \text{ const. for each } i \}$.)

We thus have that $\curl^{-1}: X_0 \rightarrow H^1(\Omega)$ is bounded. Given a bounded sequence $\{\omega_i\}$ in $X_0$, the Rellich--Kondrachov theorem implies $\{\curl^{-1} \omega_i\}$ has a convergent subsequence in $L^2(\Omega)$. Recalling that $K\omega = -\nabla \omega_0 \cdot \curl^{-1} \omega$, with $\nabla \omega_0$ bounded (because $\psi_0 \in C^3$), we conclude that $\{K\omega_i\}$ has an $L^2$-convergent subsequence, hence $K: X_0 \rightarrow X$ is compact.

To complete the proof we must show that the range of $K$ is contained in $X_0$. Observing that $K\omega = -\dv[ \omega_0 \nabla^{\bot} (\Delta^{-1} \omega)]$, we have from Stokes' theorem that $\int_{\Omega} K\omega = 0$ because $\Delta^{-1} \omega$ is constant on each component of $\partial \Omega$.
\end{proof}

We next use the fact that $\Lo$ has an infinite-dimensional kernel to show that it has no discrete spectrum.

\begin{lemma} The spectrum of $\Lo$ satisfies $\sigma_{\textrm{ess}}(\Lo; X_0) = \sigma(\Lo; X_0)$.
\end{lemma}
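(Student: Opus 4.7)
The plan is to exploit the preceding lemma: since $T = i\Lo$ is self-adjoint, every isolated point of $\sigma(\Lo;X_0)$ is an eigenvalue, and the essential spectrum agrees with the full spectrum after removing isolated eigenvalues of finite multiplicity. The whole lemma therefore reduces to showing that every eigenvalue of $\Lo$ on $X_0$ has infinite-dimensional eigenspace.

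The mechanism is a chain-rule identity. Formally,
\begin{equation*}
\Lo\bigl(f(\psi_0)\omega\bigr) = f(\psi_0)\,\Lo\omega + f'(\psi_0)\,\omega\,\bigl(\nabla^{\bot}\psi_0\cdot\nabla\psi_0\bigr) = f(\psi_0)\,\Lo\omega,
\end{equation*}
since $\nabla^{\bot}\psi_0\perp\nabla\psi_0$, so multiplying an eigenfunction by any nice function of $\psi_0$ preserves the eigenvalue. To make this rigorous at the distributional level of $D$, I would test $\psi_0^n\omega$ against $\phi\in C^{\infty}(\bar{\Omega})$ using (\ref{eqn:Lweak}) and replacing $\phi$ by $\psi_0^n\phi$, which remains a smooth test function and for which the identity $\Lo(\psi_0^n\phi)=\psi_0^n\Lo\phi$ holds classically; this gives $\psi_0^n\omega\in D$ with $\Lo(\psi_0^n\omega)=\lambda\psi_0^n\omega$.

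Given this, for any $\lambda$-eigenfunction $\omega\neq 0$, the family $\{\psi_0^n\omega\}_{n\geq 0}$ lies in the $\lambda$-eigenspace inside $D$. Linear independence follows from the observation that $\psi_0$ is a nonconstant continuous function on the connected domain $\Omega$, so the pushforward measure $(\psi_0)_*(|\omega|^2\,d\mathrm{Vol})$ has infinite support on the interval $\psi_0(\Omega)$, ruling out any polynomial annihilation $\sum c_n\psi_0^n\omega=0$. When $\lambda\neq 0$, zero mean is automatic since $\psi_0^n\omega = \lambda^{-1}\Lo(\psi_0^n\omega) \in X_0$ by the Stokes computation from Section~\ref{sec:def}; when $\lambda=0$ one uses instead that $\Lo(\psi_0^n)=0$ classically, so the family $\{\psi_0^n - \overline{\psi_0^n}\}_{n\geq 1}$, where $\overline{\psi_0^n}$ denotes the mean, is an infinite-dimensional subspace of $\ker\Lo\cap X_0$.

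The main obstacle is not conceptual but the distributional bookkeeping: one must confirm that each $\psi_0^n\omega$ actually lies in the weak domain $D$ (so that the eigen-identity genuinely produces elements of the eigenspace) and that the infinite family survives the zero-mean projection, with no assumption on the geometry of the level sets of $\psi_0$. Both points are handled cleanly once the test-function manipulation above is in place.
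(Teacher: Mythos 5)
Your overall strategy is the same as the paper's: use self-adjointness of $i\Lo$ to reduce the lemma to showing no eigenvalue has finite multiplicity, then generate an infinite family in each eigenspace by multiplying an eigenfunction by powers of $\psi_0$ (with the separate family $\psi_0^n$ minus its mean for $\lambda=0$). The distributional bookkeeping via testing against $\psi_0^n\phi$ and the zero-mean verification are fine. The gap is in the linear independence step for $\lambda\neq 0$. You assert that because $\psi_0$ is nonconstant and continuous on the connected domain, the pushforward measure $(\psi_0)_*\bigl(|\omega|^2\,d\mathrm{Vol}\bigr)$ has infinite support; but that measure only sees the values of $\psi_0$ on the set $\{\omega\neq 0\}$, not on all of $\Omega$, so nonconstancy of $\psi_0$ on $\Omega$ proves nothing. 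A priori the eigenfunction could be concentrated where $\psi_0$ takes very few values --- for instance on an open plateau where $\nabla\psi_0=0$ (perfectly possible for a $C^3$ steady state, e.g.\ a flow with velocity vanishing on an open set). Indeed, for $\lambda=0$ such eigenfunctions genuinely exist (any zero-mean $\omega$ supported in a plateau satisfies $\Lo\omega=0$, and then $\psi_0^n\omega=c^n\omega$ is a one-dimensional family), which shows your claimed ``infinite support'' statement cannot be a general fact about eigenfunctions and must use $\lambda\neq0$ somewhere.

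This is exactly the point the paper spends most of its proof on: it shows that for $\lambda\neq 0$ the set $\psi_0\bigl[\{\omega\neq0\}\bigr]$ cannot be Lebesgue-null. If it were, then (by the coarea formula) $\{\omega\neq0\}\cap\{\nabla\psi_0\neq0\}$ has measure zero, and testing the eigenvalue equation against an arbitrary $\phi\in C^\infty(\bar\Omega)$ gives $\lambda\int_\Omega\omega\phi=-\int_\Omega\omega\,\Lo\phi=0$, since $\Lo\phi=0$ wherever $\nabla\psi_0=0$ and $\omega=0$ a.e.\ wherever $\nabla\psi_0\neq0$; as $\lambda\neq0$ this forces $\omega=0$, a contradiction. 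With uncountably many values of $\psi_0$ on $\{\omega\neq0\}$, any polynomial relation $\sum a_i\psi_0^{n_i}=0$ on that set has infinitely many roots and so is trivial, giving linear independence. Your argument needs this (or an equivalent) step inserted; as written, the independence claim is a non sequitur, while the $\lambda=0$ branch of your proposal is correct and agrees with the paper.
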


\begin{proof} It suffices to show that $\sigma(\Lo; X_0)$ contains no eigenvalues of finite multiplicity. Suppose such an eigenvalue $\lambda$ exists, with eigenfunction $\omega \in D_0$. We first assume that $\lambda \neq 0$. For any positive integer $n$ it follows that $\Lo ( \psi_0^n \omega ) = \lambda \psi_0^n \omega$ (because $\Lo \psi_0  = 0$), and Stokes' theorem implies that $\psi_0^n \omega$ has zero mean. We will contradict the finite multiplicity of $\lambda$ by proving that the set $\{\psi_0^n \omega \}$ is linearly independent

If this were not the case, there would exist nonzero constants $\{a_i\}$ and positive integers $\{n_i\}$ so that
\begin{align*}
	\left( \sum_{i=1}^N a_i \psi_0^{n_i} \right) \omega= 0
\end{align*}
almost everywhere, hence
\begin{align*}
	\sum_{i=1}^N a_i \psi_0^{n_i} = 0
\end{align*}
on the set $\{ \omega \neq 0\}$. To complete the proof, we show that $\psi_0$ assumes uncountably many values on the set $\{ \omega \neq 0\}$, and so there are an infinite number of roots for the polynomial equation $\sum a_i x^{n_i} = 0$. This implies $a_1 = \cdots = a_N = 0$, thus establishing the linear independence of the set $\{ \psi_0^n \omega \}$.

We again proceed by contradiction, assuming the set $\psi_0 \left[ \{ \omega \neq 0\} \right] \subset \mathbb{R}$ has zero measure, and hence $\{ \omega \neq 0 \} \cap \{ \nabla \psi_0 \neq 0 \} \subset \Omega$ also has zero measure. Then for any test function $\phi \in C^{\infty}(\bar{\Omega})$ we have
\begin{align*}
	\lambda \int_{\Omega} \omega \phi &= - \int_{\Omega} \omega \Lo \phi \\
		&= 0
\end{align*}
because $\Lo \phi = 0$ where $\nabla \psi_0 = 0$ and $\omega = 0$ where $\nabla \psi_0 \neq 0$. Recalling that $\lambda \neq 0$, this implies $\omega = 0$, which gives the desired contradiction.

When $\lambda =0$, we can consider the set of eigenfunctions
\begin{align*}
	\psi_0^n - \frac{\int_{\Omega} \psi_0^n}{\int_{\Omega} 1}
\end{align*}
which are in $D_0$ by construction. These are easily seen to be linearly independent (as above) because $\psi_0$ is nonconstant.
\end{proof}

We next show that, for spectral purposes, it suffices to consider $\Lo: D \rightarrow X$, without the zero mean restriction.

\begin{lemma} The spectrum of $\Lo$ satisfies $\sigma(\Lo; X_0) = \sigma(\Lo; X)$.
\end{lemma}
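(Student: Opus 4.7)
The plan is to exploit the orthogonal decomposition $X = X_0 \oplus \mathbb{C}$, where $\mathbb{C}$ denotes the one-dimensional subspace of constants, and to verify that $\Lo$ is block-diagonal with respect to this splitting. First, every constant $c$ lies in $D$ and satisfies $\Lo c = 0$, so the restriction of $\Lo$ to $\mathbb{C}$ is the zero operator and $D = D_0 \oplus \mathbb{C}$. Second, for $\omega \in D_0 \cap C^1(\bar{\Omega})$ the identity $\dv(\omega \np \psi_0) = \omega \, \dv(\np \psi_0) + \np \psi_0 \cdot \nabla \omega = \Lo \omega$ combined with Stokes' theorem and the tangency condition $\np \psi_0 \cdot \bn = 0$ on $\partial \Omega$ yields $\int_\Omega \Lo \omega = 0$; extending by density (as was done just after \eqref{eqn:Lweak}) shows $\Lo$ sends all of $D_0$ into $X_0$. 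On the torus there is no boundary, and on the cylinder the tangency is built into the standing assumption, so the argument applies uniformly across all three geometries.

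With respect to $X = X_0 \oplus \mathbb{C}$, the operator $\Lo - \lambda I$ then has the block form $\operatorname{diag}(\Lo|_{X_0} - \lambda I, -\lambda I_{\mathbb{C}})$, and a bounded inverse on $X$ exists if and only if bounded inverses exist for both blocks. The second block is invertible precisely when $\lambda \neq 0$, so
\begin{align*}
	\sigma(\Lo; X) = \sigma(\Lo; X_0) \cup \{0\}.
\end{align*}

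To close the loop, I would invoke the preceding lemma: the function $\omega = \psi_0 - |\Omega|^{-1} \int_{\Omega} \psi_0$ lies in $D_0$, is nonzero since $\psi_0$ is nonconstant, and satisfies $\Lo \omega = 0$ because $\Lo \psi_0 = 0$ and $\Lo$ annihilates constants. Hence $0$ is an eigenvalue of $\Lo|_{X_0}$, in particular $0 \in \sigma(\Lo; X_0)$, and the union above collapses to the claimed equality $\sigma(\Lo; X) = \sigma(\Lo; X_0)$. I do not anticipate a serious obstacle; the only point requiring care is that the boundary integral in Stokes' theorem vanishes, but that has already been justified under the standing hypothesis $\np \psi_0 \cdot \bn = 0$.
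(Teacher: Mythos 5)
Your proof is correct and takes essentially the same route as the paper: the paper's projection/inclusion resolvent identities are exactly your block-diagonal inverse with respect to $X = X_0 \oplus \mathbb{C}$, and both arguments dispose of $\lambda = 0$ by exhibiting a zero-mean kernel element built from $\psi_0$. One small repair: instead of the unjustified density-in-graph-norm appeal to show $\Lo(D_0) \subset X_0$, simply take $\phi \equiv 1$ in the distributional definition (\ref{eqn:Lweak}), which gives $\int_{\Omega} \Lo \omega = -\int_{\Omega} \omega \, (\Lo 1) = 0$ directly.
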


\begin{proof} For the duration of the proof we let $\eL$ denote the operator $\Lo: D \rightarrow X$. We proceed by showing that the resolvent sets of $\Lo$ and $\eL$ coincide. Assuming without loss of generality that $\int_{\Omega} \psi_0 = 0$, the stream function satisfies $\Lo \psi_0 = 0$, with $\psi_0 \in D_0 \cap D$, hence $0 \in \sigma(\Lo) \cap \sigma(\eL)$.

We let $P: X \rightarrow X_0$ denote $L^2$-orthogonal projection, and $J: X_0 \rightarrow X$ inclusion. It is easily shown that $\Lo = PLJ$ and $\eL = J \Lo P$.
Then if $\lambda \in \rho(\eL)$ we have
\begin{align*}
	(\Lo - \lambda I)^{-1} = P (\eL - \lambda I)^{-1} J
\end{align*}
and if $\lambda \in \rho(\Lo)$ we have
\begin{align*}
	(\eL - \lambda I)^{-1} = J(\Lo - \lambda)^{-1} P - \lambda^{-1} (I-JP)
\end{align*}
so the resolvent sets agree.
\end{proof}

For the purpose of determining the $L^2$ essential spectrum, it is possible to discard those parts of the domain on which the steady-state stream function is singular---this works because the coefficients of $\Lo$ vanish at the singular points of $\psi_0$. 

We let $E$ denote extension by zero from $\widehat{X}$ to $X$, with $R$ the corresponding restriction from $X$ to $\widehat{X}$. It is clear that $E$ and $R$ are bounded, with $E^* = R$. Because $\widehat{\Omega} \subset \Omega$ is open and $\Lo$ is trivial outside $\widehat{\Omega}$ (up to a set of measure zero), $\Lo$ is well-behaved with respect to extension and restriction, in the sense made precise by the following lemma.

\begin{lemma} For any steady state $\psi_0$ we have $E(\widehat{D}) \subset D$ and $R(D) \subset \widehat{D}$, and
\begin{align}
	\Lo Ef &= E \Lw f  \label{eqn:EL} \\
	R \Lo g &= \Lw R g \label{eqn:RL}
\end{align}
for all $f \in \widehat{D}$ and $g \in D$, respectively. If (\ref{hyp:period}) is satisfied, then
\begin{align}
	\Lo g = \Lo ER g
	\label{eqn:Low}
\end{align}
for all $g \in D$.
\label{lemma:proj}
\end{lemma}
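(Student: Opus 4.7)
My plan is to verify the three weak identities in order, throughout exploiting the fact that $\Lo$ annihilates any function of $\psi_0$ (since $\np\psi_0 \cdot \nabla\psi_0 = 0$). For \eqref{eqn:EL} the approach is direct. Given $f \in \widehat{D}$ and $\phi \in C^\infty(\bar\Omega)$, the restriction $\phi|_{\bar{\widehat{\Omega}}}$ lies in $C^\infty(\bar{\widehat{\Omega}})$, so the weak definition of $\Lw$ applies and yields $\int_{\widehat{\Omega}} (\Lw f)\phi = -\int_{\widehat{\Omega}} f\,\Lw\phi$. Since $\Lw$ and $\Lo$ coincide classically on smooth functions on $\widehat{\Omega}$, rewriting both sides as integrals over $\Omega$ via the extension $E$ gives precisely the required identity, and in particular $Ef \in D$.

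For \eqref{eqn:RL} the obstacle is that test functions on $\bar{\widehat{\Omega}}$ need not extend smoothly to $\bar\Omega$, since $\partial \widehat{\Omega}$ is typically nonsmooth. I would construct, for each component $\widehat{\Omega}_i$ with $\psi_0(\widehat{\Omega}_i) = (a_i, b_i)$, a smooth cutoff $\chi_\epsilon = \eta_\epsilon \circ \psi_0$ on $\widehat{\Omega}_i$, where $\eta_\epsilon$ is a standard one-dimensional cutoff equal to $1$ on $[a_i+\epsilon, b_i-\epsilon]$ and supported in $(a_i+\epsilon/2, b_i-\epsilon/2)$. The crucial properties are (i) that $\Lo\chi_\epsilon = \eta_\epsilon'(\psi_0)\,\np\psi_0 \cdot \nabla\psi_0 = 0$ identically, and (ii) that $\chi_\epsilon$ extends by zero to a function in $C^\infty(\bar\Omega)$; for (ii) one verifies that $\psi_0$ attains only values in $\{a_i, b_i\}$ on $\partial \widehat{\Omega}_i \cap \Omega$, since any interior value would, via the tubular neighborhood argument used to show $\widehat{\Omega}$ is open, force the boundary point to lie on a periodic orbit and hence in $\widehat{\Omega}$. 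With this in hand, for any $\phi \in C^\infty(\bar{\widehat{\Omega}})$ supported in $\widehat{\Omega}_i$ the product $\chi_\epsilon\phi$ is a legitimate test function in $C^\infty(\bar\Omega)$; applying the weak identity for $g \in D$ and using Leibniz to reduce $\Lo(\chi_\epsilon\phi) = \chi_\epsilon\Lo\phi$ gives $\int_{\widehat{\Omega}_i} (\Lo g)\chi_\epsilon\phi = -\int_{\widehat{\Omega}_i} g\chi_\epsilon\Lw\phi$. Dominated convergence as $\epsilon \to 0$, followed by summation over components, yields \eqref{eqn:RL}.

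For \eqref{eqn:Low}, the first two identities give $ERg \in D$ with $\Lo(ERg) = ER(\Lo g)$, which coincides with $\Lo g$ on $\widehat{\Omega}$ and vanishes on $\Omega \setminus \widehat{\Omega}$. By (H) the complement $\Omega \setminus (\widehat{\Omega} \cup \Omega_0)$ is null, so it suffices to show $\Lo g = 0$ almost everywhere on $\Omega_0$. Since $g$ is only $L^2$, the pointwise argument $\Lo g = \np\psi_0 \cdot \nabla g$ (which would vanish on $\Omega_0$) is unavailable, so I would instead use Friedrichs mollification: approximate $g$ on an exhausting sequence of interior subdomains by $g_\delta = g * \rho_\delta \in C^\infty$, so that $g_\delta \to g$ and (by the standard first-order commutator estimate, valid since $\np\psi_0 \in C^2$) $\Lo g_\delta \to \Lo g$ in $L^2_{\textrm{loc}}$. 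Each $\Lo g_\delta$ is the classical product $\np\psi_0 \cdot \nabla g_\delta$ and vanishes pointwise on $\Omega_0$, so its $L^2$ limit $\Lo g$ vanishes almost everywhere there. The main obstacle is precisely this final regularity step: although standard, it is what makes the argument work even when $\Omega_0$ has positive measure.
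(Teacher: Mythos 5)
Your proposal is correct, and for \eqref{eqn:EL} it is essentially the paper's own computation: pair $E\Lw f$ against $\phi\in C^\infty(\bar\Omega)$, note that $R\phi$ is an admissible test function for $\Lw$ and that $\Lw(R\phi)=R(\Lo\phi)$ classically. Where you genuinely diverge is in \eqref{eqn:RL} and \eqref{eqn:Low}. For \eqref{eqn:RL} the paper's (terse) route is to reuse \eqref{eqn:EL}: a test function $\phi$ on the closure of $\widehat\Omega$ lies in $\widehat D$, so $E\phi\in D$ with $\Lo E\phi=E\Lw\phi$, and one then pairs $\Lo g$ against $E\phi$ using, in effect, the skew-symmetry of $\Lo$ on $D$ already available from the self-adjointness lemma; no global cutoffs are needed. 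Your construction $\chi_\epsilon=\eta_\epsilon\circ\psi_0$ is a legitimate, self-contained alternative (it never invokes self-adjointness), but it carries two obligations the paper's route avoids: (i) the assertion that $\psi_0$ takes only the endpoint values $a_i,b_i$ on $\partial\widehat\Omega_i$ is true, but your one-line justification conceals that it rests on the level-to-orbit bijection within a component (which the paper assumes when defining $\mathcal J$ and $T$) together with Hausdorff convergence of nearby orbits inside a tube around the level-$c$ orbit; moreover the same fact must be checked on $\partial\widehat\Omega_i\cap\partial\Omega$, which you exclude but which matters for the extension by zero of $\chi_\epsilon\phi$ to be admissible up to $\partial\Omega$; (ii) since $\psi_0\in C^3$ only, $\chi_\epsilon\phi$ is merely $C^3(\bar\Omega)$, not $C^\infty(\bar\Omega)$, so you must either smooth it or observe that the weak identity \eqref{eqn:Lweak} extends to $C^1(\bar\Omega)$ test functions by density---routine, but it should be said. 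For \eqref{eqn:Low} the paper is considerably cheaper: under (\ref{hyp:period}) every test function $\phi$ satisfies $\Lo\phi=0$ a.e.\ off $\widehat\Omega$, so $\int_\Omega(ERg)\,\Lo\phi=\int_\Omega g\,\Lo\phi$ directly, with no commutator estimate and without using \eqref{eqn:EL} or \eqref{eqn:RL} at all; your Friedrichs-mollification argument is correct (and even yields the stronger fact that $\Lo g=0$ a.e.\ on $\Omega_0$), but it places the regularization on $g$ where the paper simply places the vanishing on the test function.
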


Using the fact that $RE$ is the identity on $\widehat{X}$, it follows immediately that
\begin{align}
	\Lw f = R \Lo E f
	\label{LWequiv}
\end{align}
for $f \in \widehat{D}$. From (\ref{eqn:EL}) and (\ref{eqn:RL}) we have $\Lo ER = ER \Lo = E \Lw R$ and so, when (\ref{hyp:period}) is satisfied,
\begin{align}
	\Lo g = E \Lw R g
	\label{L0equiv}
\end{align}
for all $g \in D$.

\begin{proof} It is clear that (\ref{eqn:RL}) holds for all differentiable functions, so for any test function $\phi \in C^{\infty}(\bar{\Omega})$ and $\omega \in \widehat{D}$ we have
\begin{align*}
	\int_{\Omega} \phi (E \Lw \omega) &= \int_{\widehat{\Omega}} (R \phi) (\Lw \omega) \\
	&= - \int_{\widehat{\Omega}} \omega \Lw(R\phi) \\
	&= - \int_{\widehat{\Omega}} \omega R(\Lw \phi) \\
	&= - \int_{\Omega} (E \omega) ( \Lw \phi).
\end{align*}
Thus, from the distributional definition of $\Lo$, we have that $E \omega \in D$, and $\Lo E \omega = E \Lw \omega$. This establishes (\ref{eqn:EL}); the proof of (\ref{eqn:RL}) is similar, starting with the fact that (\ref{eqn:EL}) holds in $\widehat{D}$, and hence for all test functions on $\widehat{\Omega}$.

To prove (\ref{eqn:Low}), we first observe that each test function $\phi$ on $\Omega$ satisfies $\Lo \phi  =0$ a.e. in $\widehat{\Omega}^c$. This implies $ER(\omega \Lo \phi) = \omega \Lo \phi$ a.e. in $\Omega$ for any $\omega \in X$, hence
\begin{align*}
	\int_{\Omega} (ER \omega)(\Lo \phi) &= \int_{\widehat{\Omega}} (R \omega)(R \Lo \phi) \\
		&= \int_{\widehat{\Omega}} R (\omega \Lo \phi) \\
		&= \int_{\Omega} ER (\omega \Lo \phi) \\
		&= \int_{\Omega} \omega \Lo \phi.
\end{align*}
For $\omega \in D$ this implies $\Lo ER \omega = \Lo \omega$, as claimed.
\end{proof}

The final result we need before comparing the spectra of $\Lo$ and $\Lw$ is the following localization property for $(\Lo - \lambda)^{-1}$. 

\begin{lemma} If $\lambda \in \rho(\Lo; X)$, then
\begin{align}
	ER (\Lo - \lambda I)^{-1} Ef = (\Lo - \lambda)^{-1} Ef
	\label{invlocal}
\end{align}
for all $f \in \widehat{X}$.
\end{lemma}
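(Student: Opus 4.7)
The plan is to show that the vector $g := (\Lo - \lambda I)^{-1} Ef$ is actually supported (in the almost-everywhere sense) in $\widehat{\Omega}$, i.e.\ that $ERg = g$. Equivalently, I will check that $ERg$ lies in $D$ and that $(\Lo - \lambda I)(ERg) = Ef$; then injectivity of $\Lo - \lambda I$ (which is available since $\lambda \in \rho(\Lo; X)$) finishes the job.

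The key ingredient is the intertwining identity $\Lo(ERg) = ER(\Lo g)$, which I would derive directly from Lemma \ref{lemma:proj}. Starting from $g \in D$, relation (\ref{eqn:RL}) gives $Rg \in \widehat{D}$ together with $R \Lo g = \Lw R g$. Feeding $Rg \in \widehat{D}$ into (\ref{eqn:EL}) then gives $ERg \in D$ and
\begin{equation*}
    \Lo(ERg) = E \Lw (R g) = E R (\Lo g).
\end{equation*}
This is the only place where the structure of $\Lo$ relative to $\widehat{\Omega}$ enters, and it is precisely what Lemma \ref{lemma:proj} was set up to provide.

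Combining this with the obvious identity $ER(\lambda g) = \lambda ERg$ (since $E$ and $R$ are linear) yields
\begin{equation*}
    (\Lo - \lambda I)(ERg) = ER(\Lo g) - \lambda (ERg) = ER\bigl((\Lo - \lambda I) g\bigr) = ER(Ef) = Ef,
\end{equation*}
where in the last step I use that $RE$ is the identity on $\widehat{X}$. Because $\lambda$ lies in the resolvent set of $\Lo$ on $X$, the equation $(\Lo - \lambda I) h = Ef$ has a unique solution $h \in D$, namely $g$. Therefore $ERg = g$, which is (\ref{invlocal}).

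The argument is essentially bookkeeping once Lemma \ref{lemma:proj} is in hand; the only place one has to be careful is the step verifying $ERg \in D$ before $(\Lo - \lambda I)$ can legitimately be applied to it. That domain check is the main obstacle, and it is exactly handled by the chain $g \in D \Rightarrow Rg \in \widehat{D} \Rightarrow ERg \in D$ supplied by (\ref{eqn:RL}) and (\ref{eqn:EL}). Note that hypothesis (H) is not needed here: we are only asserting that the resolvent preserves functions supported in $\widehat{\Omega}$, not that it is essentially determined by its action on such functions.
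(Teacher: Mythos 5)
Your proof is correct and follows essentially the same route as the paper: both arguments rest on Lemma \ref{lemma:proj} (via the intertwining $\Lo ER = ER\Lo = E\Lw R$ on $D$) together with injectivity of $\Lo - \lambda I$, the only cosmetic difference being that you verify $(\Lo-\lambda I)(ERg)=Ef$ directly, while the paper shows $(\Lo-\lambda I)(g-ERg)=0$ by a support argument. Your observation that hypothesis (H) is not needed is also consistent with the paper.
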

Thus if $u \in D$ solves the equation $(\Lo - \lambda I) u = Ef$ for some $f \in \widehat{X}$, and $\lambda$ is in the resolvent set of $\Lo$, then $u = 0$ a.e. in $\widehat{\Omega}^c$. We note that the relation
\begin{align}
	ER(\Lo - \lambda I)Ef = (\Lo - \lambda I)Ef
	\label{local}
\end{align}
for $f \in \widehat{X}$ is an immediate consequence of (\ref{eqn:Low}).

\begin{proof} Let $u = (\Lo - \lambda I)^{-1} Ef$ for $f \in \widehat{X}$. We decompose $u = ERu + (I-ER)u$. Lemma \ref{lemma:proj} implies that both summands are contained in $D$, so we can write
\begin{align*}
	Ef = ER(\Lo - \lambda I)u + (\Lo - \lambda I)(u - ERu).
\end{align*}
This demonstrates that $(\Lo - \lambda I)(u - ERu)$ vanishes in $\widehat{\Omega}^c$ (because it is in the range of $E$), and by definition it vanishes in $\widehat{\Omega}$, hence
\begin{align*}
	(\Lo - \lambda I)(u - ERu) = 0.
\end{align*}
Since $\lambda$ is in the resolvent set of $\Lo$ we find that $u - ERu = 0$, which completes the proof.
\end{proof}

We are now ready to compare the spectra of $\Lo$ and $\Lw$.

\begin{lemma} The spectra satisfy $\sigma(\Lw) \subseteq \sigma(\Lo)$, with equality if (\ref{hyp:period}) is satisfied.
\end{lemma}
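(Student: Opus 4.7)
The claim reduces to proving $\rho(\Lo)\subseteq\rho(\Lw)$ in general, with the reverse inclusion holding when (H) is satisfied. In both directions the strategy is to write down an explicit candidate inverse built from the resolvent of the other operator together with the extension/restriction maps $E$ and $R$, and to check that it does the job using the intertwining identities (\ref{eqn:EL})--(\ref{eqn:RL}). Since $\Lo$ annihilates constants and $\Lw$ annihilates the indicator of $\widehat{\Omega}$, we have $0\in\sigma(\Lo;X)\cap\sigma(\Lw;\widehat{X})$, so throughout it is harmless to assume $\lambda\neq 0$.

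For $\rho(\Lo)\subseteq\rho(\Lw)$, the natural candidate is $(\Lw-\lambda I)^{-1}=R(\Lo-\lambda I)^{-1}E$. Given $f\in\widehat{X}$, set $g=(\Lo-\lambda I)^{-1}Ef\in D$; Lemma \ref{lemma:proj} gives $Rg\in\widehat{D}$, and applying $R$ to $(\Lo-\lambda I)g=Ef$ together with (\ref{eqn:RL}) yields $(\Lw-\lambda I)Rg=REf=f$. For injectivity, if $(\Lw-\lambda I)u=0$ with $u\in\widehat{D}$, then (\ref{eqn:EL}) gives $Eu\in D$ and $(\Lo-\lambda I)Eu=0$, forcing $Eu=0$ and hence $u=0$. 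Boundedness of the inverse follows from boundedness of $E$, $R$, and $(\Lo-\lambda I)^{-1}$.

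For the reverse inclusion under (H), let $\lambda\in\rho(\Lw)$, $\lambda\neq 0$, and propose
\begin{align*}
(\Lo-\lambda I)^{-1}g\;=\;E(\Lw-\lambda I)^{-1}Rg\;-\;\lambda^{-1}(I-ER)g.
\end{align*}
The first summand lies in $D$ by Lemma \ref{lemma:proj}, and for the second the distributional computation carried out inside the proof of (\ref{eqn:Low}) shows that under (H) one has $(I-ER)g\in D$ with $\Lo(I-ER)g=0$ for every $g\in X$; a short calculation using $\Lo E=E\Lw$ and this vanishing then collapses $(\Lo-\lambda I)$ applied to the candidate to $ERg+(I-ER)g=g$. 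For injectivity, if $(\Lo-\lambda I)u=0$, applying $R$ and using (\ref{eqn:RL}) gives $(\Lw-\lambda I)Ru=0$; hence $Ru=0$, so $u=(I-ER)u$, and then $\Lo u=0$ forces $\lambda u=0$. The main obstacle is justifying the correction term $-\lambda^{-1}(I-ER)g$: without (H), nonperiodic non-fixed trajectories could fill a positive measure subset of $\Omega\setminus(\widehat{\Omega}\cup\Omega_0)$, on which $\Lo$ acts nontrivially, so $(I-ER)g$ would no longer lie in $D$ nor be annihilated by $\Lo$, and the candidate inverse would fail. This is precisely the point at which (H) becomes indispensable.
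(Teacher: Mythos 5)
Your proof is correct and follows essentially the same route as the paper: both directions are handled by the explicit resolvent formulas $(\Lw-\lambda I)^{-1}=R(\Lo-\lambda I)^{-1}E$ and $(\Lo-\lambda I)^{-1}=E(\Lw-\lambda I)^{-1}R-\lambda^{-1}(I-ER)$, verified via the intertwining identities of Lemma \ref{lemma:proj}. The only (harmless) deviations are that you check injectivity directly from (\ref{eqn:EL}) and (\ref{eqn:RL}) rather than invoking the localization identity (\ref{invlocal}), and you correctly note that the computation in the proof of (\ref{eqn:Low}) gives $(I-ER)g\in D$ with $\Lo(I-ER)g=0$ for all $g\in X$ under (H), which is exactly what the correction term requires.
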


\begin{proof} We proceed by studying the resolvent sets of the two operators. First suppose that $\lambda \in \rho(\Lo)$, so that $\Lo - \lambda I : D \rightarrow X$ has a bounded inverse. Using (\ref{LWequiv}), (\ref{invlocal}) and (\ref{local}), it is easily verified that $\lambda \in \rho(\Lw)$, with
\begin{align*}
	(\Lw - \lambda I)^{-1} = R (\Lo - \lambda I)^{-1} E.
\end{align*}

Now suppose (\ref{hyp:period}) is satisfied and $\lambda \in \rho(\Lw)$, so that $\Lw - \lambda I :\widehat{D} \rightarrow \widehat{X}$ has a bounded inverse. Using (\ref{eqn:Low}) and (\ref{L0equiv}), it is easily verified that $\lambda \in \rho(\Lo)$, with
\begin{align*}
	(\Lo - \lambda)^{-1} = E(\Lw - \lambda I)^{-1} R - \lambda^{-1} (I - ER ).
\end{align*}
\end{proof}


\section{The streamline decomposition}
\label{sec:geom}
Having related the essential spectrum of $\Lv$ to the spectrum of the restricted operator $\Lw$, we proceed with our geometric computation of the latter. Recalling the index set $\mathcal{J}$ defined in (\ref{eqn:index}), we have by the co-area formula (see, for instance, \cite{C06})
\begin{align}
	\int_{\widehat{\Omega}} f^2 dx dy &= \int_{-\infty}^{\infty} \left( \int_{\psi_0^{-1}(\rho)} \frac{f^2}{|\nabla \psi_0|} ds \right) d \rho \nonumber \\
	&= \int_{\mathcal{J}} \left( \int_{\Gamma(\rho)} \frac{f^2}{|\nabla \psi_0|} ds \right) d \rho
	\label{eqn:coarea1}
\end{align}
for any $f \in L^2(\widehat{\Omega})$, where $ds$ denotes the induced measure on each streamline and $\Gamma(\rho)$ is the periodic orbit corresponding to $\rho \in \mathcal{J}$.

By assumption, each $\Gamma(\rho)$ is diffeomorphic to $\mathbb{S}^1$, with period $T(\rho)$. Letting $t$ denote time under the $\psi_0$-flow, the angular coordinate $\theta := 2 \pi t / T(\rho)$ satisfies
\begin{align*}
	d \theta &= \frac{2 \pi}{T(\rho)} dt \\
	&= \frac{2 \pi}{T(\rho)} \frac{1}{|\nabla \psi_0|} ds,
\end{align*}
hence (\ref{eqn:coarea1}) can be written as
\begin{align*}
	\| f \|_{\widehat{X}}^2 = \int_{\mathcal{J}}  \left( \int_{\Gamma(\rho)} f^2 d\theta \right) d \mu(\rho)
\end{align*}
where we have defined the measure $d\mu(\rho) = (T(\rho)/ 2 \pi) d \rho$ on each component of $\mathcal{J}$. Thus in the direct integral notation of \cite{RS78} we have shown that
\begin{align*}
	\widehat{X} = \int_{\mathcal{J}}^{\oplus} L^2(\mathbb{S}^1) d\mu(\rho).
\end{align*}

We apply this geometric decomposition to the operator $\Lw$. To each $\rho \in \mathcal{J}$ we associate an unbounded operator $A(\rho)$ on $L^2(\mathbb{S}^1)$, with domain $H^1(\mathbb{S}^1)$, given by
\begin{align*}
	A(\rho) = -\frac{2\pi i}{T(\rho)} \frac{d}{d\theta}.
\end{align*}
Since each $A(\rho)$ is self-adjoint, and the period function $T: \mathcal{J} \rightarrow (0,\infty)$ is continuous, there exists a self-adjoint, unbounded operator $A$ (following the construction in Section XIII.16 of \cite{RS78}) on $L^2(\widehat{\Omega})$ given by $(A\omega)(\rho) = A(\rho) \omega(\rho)$, with domain
\begin{align*}
	D(A) = \left\{ \omega \in L^2(\widehat{\Omega}) : \omega(\rho) \in D(A(\rho)) \text{ a.e.}, \int_{\mathcal{J}} \| A(\rho) \omega(\rho) \|_{L^2(\mathbb{S}^1)}^2 d\mu < \infty \right\}.
\end{align*}

The main technical result of this section is that the operator $A$ defined above coincides with $\Lw$, up to a scalar multiple.

\begin{lemma} The operator $A$ satisfies $D(A) = \widehat{D}$ and $A = i\Lw$.
\label{lem:Asplit}
\end{lemma}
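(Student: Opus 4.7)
The plan is to work in action-angle coordinates $(\rho,\theta)\in\mathcal{J}\times\mathbb{S}^1$ on each connected component of $\widehat{\Omega}$, where $\rho=\psi_0$ and $\theta$ is the angular parameter along the Hamiltonian flow introduced before the lemma. In these coordinates the coarea formula already derived gives $dA = d\theta\,d\mu(\rho)$, which is exactly the direct-integral measure used to build $A$, and a direct chain-rule computation shows that $\Lw=\nabla^\bot\psi_0\cdot\nabla$ acts on smooth functions as a scalar multiple of $\partial_\theta$ along each streamline---this is precisely the smooth version of the identity $A=i\Lw$. The content of the lemma is then to upgrade this fiberwise identification to the full distributional domains $D(A)$ and $\widehat{D}$, which I would do by testing the distributional defining identity of $\widehat{D}$ against two different classes of $\phi$.

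For the inclusion $D(A)\subseteq\widehat{D}$, I would take $\omega\in D(A)$ and verify
\[
\int_{\widehat{\Omega}}(iA\omega)\phi = -\int_{\widehat{\Omega}}\omega\,\Lw\phi
\]
for every $\phi\in C^\infty(\overline{\widehat{\Omega}})$. Applying coarea to both sides reduces the identity to an iterated integral over $\mathcal{J}\times\mathbb{S}^1$; on each fiber this is classical integration by parts, with no boundary contributions because $\mathbb{S}^1$ has none. This yields $\Lw\omega=-iA\omega$ distributionally and hence $\omega\in\widehat{D}$ with $A\omega=i\Lw\omega$.

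For the reverse inclusion $\widehat{D}\subseteq D(A)$, let $\omega\in\widehat{D}$ with $\Lw\omega=f\in L^2(\widehat{\Omega})$, and test the distributional identity against product functions $\phi(x)=\alpha(\psi_0(x))\beta(\theta(x))$ with $\alpha\in C_c^\infty$ supported in an open subinterval of $\mathcal{J}$ and $\beta\in C^\infty(\mathbb{S}^1)$. Coarea, Fubini, and the arbitrariness of $\alpha$ collapse the identity, for a.e.\ $\rho$, to the one-dimensional distributional statement
\[
\int_{\mathbb{S}^1} f(\rho,\theta)\beta(\theta)\,d\theta = -\frac{2\pi}{T(\rho)}\int_{\mathbb{S}^1}\omega(\rho,\theta)\beta'(\theta)\,d\theta,
\]
which says $\omega(\rho,\cdot)\in H^1(\mathbb{S}^1)$ and produces the fiberwise identity $A(\rho)\omega(\rho,\cdot)=if(\rho,\cdot)$. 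Integrating over $\mathcal{J}$ gives $\|A\omega\|_{\widehat{X}}^2=\|f\|_{\widehat{X}}^2<\infty$, so $\omega\in D(A)$ and $A\omega=i\Lw\omega$.

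The main technical obstacle is the global construction of the angular coordinate $\theta$ on each component $\widehat{\Omega}_i$: while $\theta$ is intrinsic to each individual streamline, the coarea-Fubini calculations above need $\theta$ to depend smoothly on $\rho$ throughout $\widehat{\Omega}_i$. This can be arranged by picking a smooth transversal section to the foliation (which exists because each $\widehat{\Omega}_i$ is diffeomorphic to an open annulus, as follows from the tubular neighborhood argument of the preceding lemma) and flowing along $\nabla^\bot\psi_0$ to propagate the basepoint; once this is fixed, the product test functions $\alpha(\psi_0)\beta(\theta)$ are genuinely smooth and compactly supported in $\widehat{\Omega}$, and the entire argument can be carried out componentwise over the disjoint union defining $\mathcal{J}$.
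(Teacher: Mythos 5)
Your proposal is correct and takes essentially the same route as the paper: both inclusions are obtained from the coarea/direct-integral decomposition together with fiberwise integration by parts on each streamline, exactly as in the paper's proof. Your product test functions $\alpha(\psi_0)\beta(\theta)$ and the choice of a transversal section merely make explicit the localization-in-$\rho$ step and the global angular coordinate that the paper uses implicitly.
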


\begin{proof}  Letting $\phi_t$ denote the flow coming from the velocity field $- \nabla^{\bot}\psi_0$, we have
\begin{align*}
	\Lw f &= - \left. \frac{d}{dt}\right|_{t=0} f(\phi_t(\cdot)) \\ &= - \frac{2 \pi}{T(\rho)} \frac{\partial f}{\partial \theta}
\end{align*}
when $f$ is smooth on the closure of $\widehat{\Omega}$.

If $\omega \in D(A)$, then $\omega(\rho) \in H^1(\mathbb{S}^1)$ for almost all $\rho$, hence we can integrate by parts on each streamline to find
\begin{align*}
	\int_{\Gamma(\rho)} (A\phi)(\rho) \omega(\rho) d\theta = \int_{\Gamma(\rho)} \phi(\rho) (A\omega)(\rho) d\theta,
\end{align*}
when $\phi$ is a smooth test function on the closure of $\widehat{\Omega}$. Integrating over $\mathcal{J}$, we see that $\left<A\phi,\omega\right> = \left<\phi, A\omega \right>$, and $A\omega \in L^2(\Omega)$. Since $A\phi = i \Lw \phi$, we conclude that $\omega \in \widehat{D}$, with $\Lw \omega = -iA\omega$.

Conversely, suppose that $\omega \in \widehat{D}$. Then by definition there exists $f \in L^2(\widehat{\Omega})$ such that
\begin{align*}
	-i \int_{\mathcal{J}} \int_{\Gamma(\rho)} (A\phi)(\rho) \omega(\rho) d\theta d\mu(\rho) = 	\int_{\mathcal{J}} \int_{\Gamma(\rho)} \phi(\rho) f(\rho) d\theta d\mu(\rho)
\end{align*}
for every smooth test function $\phi$. This implies
\begin{align*}
	-i \int_{\Gamma(\rho)} [A(\rho) \phi(\rho)] \omega(\rho) d\theta = \int_{\Gamma(\rho)} \phi(\rho) f(\rho) d\theta
\end{align*}
a.e., 
hence $\omega(\rho) \in H^1(\mathbb{S}^1)$ with $-iA(\rho)\omega(\rho) = f(\rho)$. Therefore $\omega \in D(A)$, with $-iA\omega = \Lw \omega$.
\end{proof}


\section{Proof of the main theorem}
With the streamline decomposition established, we can now give the proof of Theorem \ref{thm:main}. We divide the proof into two cases, according to whether or not Hypothesis \ref{hyp:period} is satisfied.

If it is, then $\sigma_{\textrm{ess}}(\Lv; X_0) = \sigma(\Lw; \widehat{X})$. Applying Lemma \ref{lem:Asplit} above, and Theorem XIII.85 of \cite{RS78}, we have that $i \lambda \in \sigma(\Lw; \widehat{X})$ if and only if for all $\epsilon > 0$ the set
\begin{align*}
	\left\{ \rho : \sigma(A(\rho); L^2(\mathbb{S}^1)) \cap (\lambda- \epsilon, \lambda + \epsilon) \neq \emptyset \right\} \subset \mathcal{J}
\end{align*}
has positive $\mu$-measure. An elementary computation shows that
\begin{align*}
	\sigma(A(\rho); L^2(\mathbb{S}^1)) = \frac{2 \pi}{T(\rho)} \mathbb{Z}
\end{align*}
for each $\rho \in \mathcal{J}$, hence
\begin{align*}
	\left\{ \rho : \sigma(A(\rho); L^2(\mathbb{S}^1)) \cap (\lambda- \epsilon, \lambda + \epsilon) \neq \emptyset \right\}	&= \bigcup_{k \in \mathbb{Z}} T^{-1} \left( \frac{2 \pi k}{\lambda + \epsilon}, \frac{2 \pi k}{\lambda - \epsilon} \right).
\end{align*}
Therefore $\lambda \in \sigma(i \Lw; \widehat{X})$ if and only if for every $\epsilon > 0$ there exists $k \in \mathbb{Z}$ with
\begin{align}
	\mu \left[ T^{-1} \left( \frac{2 \pi k}{\lambda + \epsilon}, \frac{2 \pi k}{\lambda - \epsilon} \right) \right] > 0.
	\label{spec:nsc1}
\end{align}

We now prove that this condition is satisfied precisely when there exist $\{\rho_n\}$ and $\{k_n\}$ such that 
\begin{align}
	\lambda = \lim_{n \rightarrow \infty} \frac{2\pi k_n}{T(\rho_n)}.
	\label{spec:nsc2}
\end{align}
If (\ref{spec:nsc1}) is satisfied, then for each $n$ there exists $k_n \in \mathbb{Z}$ such that
\begin{align*}
	\left\{ \rho : \frac{2 \pi k_n}{\lambda + 1/n} < T(\rho) <  \frac{2 \pi k_n}{\lambda - 1/n} \right\}
\end{align*}
has positive $\mu$-measure, and hence contains an element, say $\rho_n$. Therefore (\ref{spec:nsc2}) holds.

Conversely, suppose (\ref{spec:nsc2}) is satisfied. By compactness there exists a subsequence $\{\rho_n\}$ converging to an element in $\overline{\mathcal{J}}$. Let $\epsilon > 0$. Then for $n$ sufficiently large we have
\begin{align*}
	\frac{2\pi k_n}{\lambda+\epsilon} < T(\rho_n) < \frac{2\pi k_n}{\lambda-\epsilon}.
\end{align*}
It follows from the continuity of $T$ that 
\begin{align*}
	T^{-1} \left( \frac{2 \pi k_n}{\lambda + \epsilon}, \frac{2 \pi k_n}{\lambda - \epsilon} \right)
\end{align*}
contains an open interval around $\rho_n$ and hence has positive $\mu$-measure.

We have thus established (\ref{spec}) in the case that (\ref{hyp:period}) is satisfied. If the flow has arbitrarily long orbits, this immediately yields $\sigma_{\textrm{ess}}(\Lv; X_0)= i \mathbb{R}$.

It remains to deal with the case that Hypothesis \ref{hyp:period} is not satisfied. In this case there necessarily exists an aperiodic trajectory in $\Omega$. Thus for any $N \in \mathbb{N}$ there is a bounded orbit $\Gamma_N$ of length $\geq N$, with the distance between $\Gamma_N$ and $\partial \Omega$ bounded away from zero. Then we can use the approximate eigenfunction construction of \cite{LS05} (which is supported in a tubular neighborhood of the streamline $\Gamma_N$), to show that $\sigma_{\textrm{ess}}(\Lv; X_0)= i \mathbb{R}$. This completes the proof of Theorem \ref{thm:main}.


\bibliographystyle{plain}
\bibliography{euler}

\end{document}